\def \1{\mathds 1}
\def \al{\alpha}
\def \C{{\mathbb C}}
\def \CD{{\cal D}}
\def \df{\ \begin{array}{c} _{\rm def}\\ ^{\displaystyle =}\end{array}\ }
\def \eps{\varepsilon}
\def \Id{{\rm Id}}
\def \la{\lambda}
\def \N{{\mathbb N}}
\def \ol{\overline}
\def \Pr{\operatorname{Pr}}
\def \Q{{\mathbb Q}}
\def \sm{\smallsetminus}
\def \tr{\operatorname{tr}}
\def \({\left(}
\def \){\right)}
\newcommand{\bigcupdot}{\mathop{\mathaccent\cdot{\bigcup}}}
\newcommand{\e}
[1]{\emph{#1}\index{#1}}
\newcommand{\norm}
[1]{\left|\hspace{-1pt}\left|#1\right|\hspace{-1pt}\right|}
\renewcommand{\sp}
[1]{\left\langle #1\right\rangle}
\newcommand{\tto}
[1]{\stackrel{#1}{\longrightarrow}}
\newtheorem{theorem}{Theorem}[subsection]
\newtheorem{lemma}[theorem]{Lemma}
\newtheorem{proposition}[theorem]{Proposition}
\theoremstyle{definition}
\newtheorem{example}[theorem]{Example}
\newtheorem{definition}[theorem]{Definition}
\begin{document}

\pagestyle{myheadings} \markright{TWO APPLICATIONS OF NETS}

\title{Two applications of nets\\ \ \\ \small
Ann. Funct. Anal. 6. no. 3, 176-190 (2015)}
\author{Ralf Beckmann \& Anton Deitmar}
\date{}
\maketitle

{\bf Abstract.}
Two applications of nets are given. The first is an extension of the Bochner integral to arbitrary locally convex spaces, leading to an integration theory of more general vector valued functions then in the classical approach by Gelfand and Pettis.
The second application starts with the observation that an operator on a Hilbert space is trace class if and only if the net of ``principal trace minors'' converges. The notion of a ``determinant class operator'' then is defined as one for which the net of determinantal principal minors converges.
It is shown that for a normal operator $A$ this condition coincides with $1-A$ being trace class.

{\bf MSC: 46E40}, 28B05, 46G10, 47A05, 47B10.

{\bf Keywords:} Net, Bochner-integral, Trace, Determinant

$$ $$



\section*{Introduction}

In this paper we give two application of nets.
The first gives a characterization of Bochner integrable functions on arbitrary locally compact spaces.
In this we extend the original approach of Bochner to vector valued integrals to the case of nets, i.e., we approximate a given function by a net of simple functions.
It is slightly astonishing that this actually works, as one is trained to think that nets and integration don't go well together, as for instance the theorems of monotone and dominated convergence fail for general nets.
This is propably the reason why this path hasn't been taken earlier, i.e., why the Bochner integral has not been generalized to arbitrary locally convex spaces.

For Banach spaces, vector-valued integrals have been constructed independently by Bochner \cite{Bochner}
and Gelfand-Pettis \cites{Gelfand, Pettis}.
The latter construction uses a convexity and compactness argument, therefore is a non-constructive existence assertion.
It has been generalized to locally convex spaces in \cite{Bourbaki}, see also \cites{Edwards}. 
The original approach of Bochner, however, has not been generalized to arbitrary locally convex spaces.
This is done in the present paper.
This approach actually allows the integration of more functions on more general spaces than the Gelfand-Pettis method.

The second example starts with the curious observation that trace class operators are exactly those for which the net of ``principal trace minors'' converges, and that the limit is the trace of the operator.
This  serves as a starting point to define the counterpart of trace class operators, the \e{determinant class operators}. We find that in the case of a normal operator $A$ we have
$$
A\text{ is determinant class}\quad\Leftrightarrow\quad 1-A\text{ is trace class},
$$
and that the determinant then equals the Fredholm determinant.
For non-normal operators we still have the ``$\Leftarrow$'' direction of this statement.
Whether the other direction holds, is an open question.

\section{Bochner Integral}
A vector valued integral is a vector attached to a function $f:X\to V$ from a measure space $X$ to a topological complex vector space $V$, written $\int_Xf\,d\mu\in V$ with the property that
$$
\al\(\int_Xf\,d\mu\)=\int_X\al(f)\,d\mu,
$$
for every continuous linear functional $\al:V\to \C$.
This property determines the vector $\int_Xf\,d\mu$ uniquely if the space $V$ is locally convex, as follows from the Hahn-Banach Theorem.
For non-locally convex spaces, the notion rarely makes sense, consider for example the case of the space $V=L^p(0,1)$ with $0<p<1$, \cite{Rudin}.
In this case there are non non-zero continuous linear functionals, therefore every vector is an integral for every function.

So from now on we assume the space $V$ to be locally convex.
In that case the topology of $V$ is generated by all continuous seminorms.
In this note we show the existence and continuity of a vector valued integral in many important cases.
This includes the case of a continuous function $f:X\to V$ of compact support, where $X$ is a locally compact space equipped with a Radon measure.
This case was considered in \cite{Bourbaki}, Chap III \S 4.

For Banach spaces, these integrals have been constructed independently by Bochner \cite{Bochner}
and Gelfand-Pettis \cites{Gelfand, Pettis}.
The latter construction uses a convexity and compactness argument, therefore is a non-constructive existence assertion.
It has been generalized to locally convex spaces before \cites{Bourbaki,Edwards,Garrett}. 
The construction of Bochner works by approximating a given function by simple ones as in the very definition of Lebesgue integration.
It therefore seems more natural and properties of this integral are, as a rule, much simpler to prove as in the Gelfand-Pettis case.
As an example of this rule, consider the strong continuity as in Definition \ref{def} (b).
To the surprise of the authors, the believe was widespread, that the Gelfand-Pettis construction extends to more general settings that the one of Bochner.
In this note we show the contrary.
We extend the approach of Bochner to locally convex spaces which satisfy a mild completeness  condition.

\subsection{Integrable functions}
By a \e{topological vector space} over $\C$ we mean a complex vector space with a topology such that addition and scalar multiplication are continuous maps from $V\times V$ respectively $\C\times V$ to $V$.
We follow the convention that insists that the set $\{ 0\}$ be closed.
This implies that $V$ is a Hausdorff space
as can be seen in \cite{Rudin}, 1.6 or in greater generality in \cite{HA2}, Proposition 1.1.6.

The space is called \e{locally convex}, if every point has a neighborhood base consisting of convex open sets.
Let $V$ be a locally convex topological vector space over $\C$ or locally convex space for short.
Let $(X,\mu)$ be a measure space and $f:X\to V$ a measurable function.
We write $V'$ for the \e{continuous dual space} of $V$, i.e., the space of all continuous linear functionals $\al:V\to\C$.

\begin{definition}\label{def}
We say that $f$ is \e{integrable}, if there exists a vector $\int_Xf\,d\mu\in V$ such that
\begin{enumerate}[\rm (a)]
\item For every  $\al\in V'$ one has
$$
\al\(\int_Xf\,d\mu\)=\int_X \al(f)\,d\mu.
$$ 
\item
For every continuous seminorm $p$ on $V$ one has
$$
p\(\int_Xf\,d\mu\)\ \le\ \int_X p(f)\,d\mu\ <\ \infty.
$$
\end{enumerate}
\end{definition}

\begin{lemma}
If $f:X\to V$ is integrable, then so is $T(f)=T\circ f$ for every continuous linear map $T:V\to W$, where $W$ is another locally convex space.
\end{lemma}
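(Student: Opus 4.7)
The plan is to show that the obvious candidate $\int_X T(f)\,d\mu := T\bigl(\int_X f\,d\mu\bigr)$ satisfies both conditions (a) and (b) of Definition \ref{def}, using the functorial observations that pulling back a continuous linear functional on $W$ along $T$ yields a continuous linear functional on $V$, and pulling back a continuous seminorm on $W$ along $T$ yields a continuous seminorm on $V$.

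First, I would verify (a). Let $\beta \in W'$ be arbitrary. Since $T:V\to W$ is continuous and linear, the composition $\alpha := \beta \circ T$ lies in $V'$. Applying condition (a) of Definition \ref{def} to $f$ with the functional $\alpha$ gives
$$
\beta\(T\(\int_X f\,d\mu\)\) = \alpha\(\int_X f\,d\mu\) = \int_X \alpha(f)\,d\mu = \int_X \beta(T(f))\,d\mu,
$$
which is exactly condition (a) for $T(f)$.

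Next, I would verify (b). Let $q$ be any continuous seminorm on $W$. Then $p := q \circ T$ is a seminorm on $V$ (subadditivity and absolute homogeneity follow from linearity of $T$ and the seminorm properties of $q$), and it is continuous because both $T$ and $q$ are. Applying condition (b) of Definition \ref{def} to $f$ with the seminorm $p$ yields
$$
q\(T\(\int_X f\,d\mu\)\) = p\(\int_X f\,d\mu\) \le \int_X p(f)\,d\mu = \int_X q(T(f))\,d\mu < \infty,
$$
which is exactly condition (b) for $T(f)$.

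There is essentially no obstacle here: the result is a direct consequence of the fact that the defining conditions (a) and (b) are phrased in terms of the dual space and the continuous seminorms, both of which pull back cleanly along continuous linear maps. One small point worth mentioning is the measurability of $T \circ f$, which is automatic since $T$ is continuous and $f$ is measurable, so the integrals on the right-hand sides of (a) and (b) are well-defined scalar integrals.
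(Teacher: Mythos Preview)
Your proof is correct and follows essentially the same approach as the paper: define $\int_X T(f)\,d\mu := T\bigl(\int_X f\,d\mu\bigr)$, then pull back functionals and seminorms along $T$ to reduce conditions (a) and (b) for $T(f)$ to those already known for $f$. Your added remark on measurability of $T\circ f$ is a nice touch that the paper leaves implicit.
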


\begin{proof}
Let $g=T(f)$.
Define $\int_Xg\,d\mu=T\(\int_Xf\,d\mu\)\in W$.
For $\al\in W'$ one has $\al\circ T\in V'$.
Therefore ,
\begin{eqnarray*}
\al\(\int_Xg\,d\mu\)&=& \al\circ T\(\int_Xf\,d\mu\)=\int_X\al\circ T(f)\,d\mu=\int_X\al(g)\,d\mu.
\end{eqnarray*}
For (b) let $p$ be a continuous seminorm on $W$. Then $p\circ T$ is a continuous seminorm on $V$.
Therefore,
\begin{eqnarray*}
p\(\int_Xg\,d\mu\)&=& p\circ T\(\int_Xf\,d\mu\)\le\int_Xp\circ T(f)\,d\mu=\int_Xp(g)\,d\mu.
\end{eqnarray*}
\end{proof}

\begin{definition}
A measurable function $f:X\to V$ is called \e{integrally bounded}, if one has
$$
\int_Xp(f)\,d\mu\ <\ \infty
$$
for every continuous seminorm $p$.
\end{definition}

\begin{definition}
The function $f$ is called \e{essentially separable}, if
 for each continuous seminorm $p$ there exists a set $N_p\subset X$ of measure zero and a countable set $C_p\subset V$ such that $f(X\sm N_p)\subset \ol{C_p}^{(p)}$, where the closure is the $p$-closure.
\end{definition}

\begin{example}
Suppose that the image $f(X)$ is relatively compact.
Then $f$ is essentially separable, since for given $n\in\N$ there are $x_1(n),\dots x_{k_n}(n)\in f(X)$ such that 
$$
f(X)\subset \bigcup_{j=1}^{k_n}x_j(n)+\frac1nU_p,
$$
where
$$
U_p=\{ v\in V: p(v)<1\}
$$ is the convex balanced open zero neighborhood attached to the semi-norm $p$.
Let $C_p$ be the set of all $x_j(n)$, where $n$ and $j$ vary.
Then $f(X)\subset \ol{C_p}^{(p)}$, so $f$ is essentially separable.
\end{example}

\begin{definition}
The function $f$ is called \e{essentially bounded}, if there exists a set $N\subset X$ of measure zero, such that $f|_{X\sm N}$ is bounded, which means it is bounded in every continuous seminorm.
\end{definition}

\begin{definition}
The space $V$ is called \e{complete} if every Cauchy-net converges. It is called \e{quasi-complete}, if every bounded Cauchy-net converges.
\end{definition}

\begin{example}
The space $\CD(M)=C_c^\infty(M)$ of smooth functions with compact supports on a smooth manifold $M$ is quasi-complete, as well as its dual space $\CD'(M)$ of distributions on $M$.
For this and more examples, see \cites{Bourbaki0,Garrett,Schaefer}.
\end{example}

\begin{theorem}\label{Thm1.1.9}
Let $V$ be a locally convex space and $f:X\to V$ a measurable function from a measure space $(X,\mu)$.
Suppose that $f$ is essentially separable and integrally bounded.
\begin{enumerate}[\rm (a)]
\item If $V$ is complete, then $f$ is integrable.
\item If $V$ is quasi-complete and $f$ is essentially bounded, then $f$ is integrable.
\item 
If $\mu(X)<\infty$ and the closure of the convex hull of $f(X)$ is complete, then $f$ is integrable.
\end{enumerate}
\end{theorem}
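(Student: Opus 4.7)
The plan is to mirror Bochner's construction by approximating $f$ with a net of simple functions, indexed over pairs $(F,\varepsilon)$ where $F$ is a finite family of continuous seminorms on $V$ and $\varepsilon>0$, directed by $(F,\varepsilon)\le(F',\varepsilon')$ iff $F\subset F'$ and $\varepsilon'\le\varepsilon$. For each such index I will build a simple $V$-valued function $s_{F,\varepsilon}$ with finite-measure support, define $I_{F,\varepsilon}=\int_X s_{F,\varepsilon}\,d\mu\in V$ as a finite sum, show that $(I_{F,\varepsilon})$ is Cauchy in $V$, and then invoke each of the three completeness hypotheses in turn to extract a limit $I$. Finally I will verify that $I$ meets the two conditions of Definition \ref{def}.

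Step 1 is the technical core. Set $p_F=\max_{p\in F}p$, still continuous, and use essential separability for $p_F$ to obtain a null set $N_F$ and a countable $C_F=\{c_1,c_2,\ldots\}\subset V$ with $f(X\sm N_F)\subset\ol{C_F}^{(p_F)}$; since $f(X\sm N_F)$ is then $p_F$-separable we may arrange $C_F\subset f(X)$, which is what makes case (c) work. Because $\int p_F(f)\,d\mu<\infty$, monotone convergence supplies $n$ with $\int_{\{p_F(f)<1/n\}\cup\{p_F(f)>n\}}p_F(f)\,d\mu<\varepsilon/3$, so $E=\{1/n\le p_F(f)\le n\}$ has finite measure. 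On $E\sm N_F$ set $A_k=\{x:p_F(f(x)-c_k)<\varepsilon/(3\mu(E))\}\sm\bigcup_{j<k}A_j$ and truncate to $k\le K$ with $K$ large enough that $\bigcup_{k>K}A_k$ contributes less than $\varepsilon/3$ to $\int p_F(f)\,d\mu$. The resulting $s_{F,\varepsilon}=\sum_{k=1}^K c_k\1_{A_k}$ is simple with finite-measure support and $\int p(f-s_{F,\varepsilon})\,d\mu<\varepsilon$ for every $p\in F$.

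Setting $I_{F,\varepsilon}=\sum_{k=1}^K\mu(A_k)c_k$, the basic estimate $p\bigl(\int_X s\,d\mu\bigr)\le\int_X p(s)\,d\mu$ for simple $s$ combined with Step 1 shows that indices beyond $(\{p\},\eta/2)$ produce $I_{F,\varepsilon}$'s agreeing to within $\eta$ in $p$, so the net is Cauchy. Under (a), completeness immediately supplies a limit $I\in V$. Under (b), essential boundedness permits replacing each $C_F$ by a countable $p_F$-dense subset of the bounded set $f(X\sm N)$, where $N$ is the null set from essential boundedness; combined with integral boundedness this confines the whole net $(I_{F,\varepsilon})$ to a bounded subset of $V$, so quasi-completeness supplies the limit. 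Under (c), $\sum_k\mu(A_k)\le\mu(X)<\infty$ together with $c_k\in f(X)$ locates each $I_{F,\varepsilon}$ inside $\mu(X)\cdot\ol{\operatorname{conv}}(f(X)\cup\{0\})$, a complete set by hypothesis, again furnishing the limit.

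Checking Definition \ref{def} for the limit $I$ is then routine. For $\alpha\in V'$ choose a continuous seminorm $p_\alpha$ with $|\alpha|\le p_\alpha$; then $\int_X\alpha(f)\,d\mu$ converges absolutely by integral boundedness, $\alpha(I_{F,\varepsilon})=\int_X\alpha(s_{F,\varepsilon})\,d\mu$ by linearity on simples, and the $L^1(p_\alpha)$-approximation of Step 1 gives $\int\alpha(s_{F,\varepsilon})\,d\mu\to\int\alpha(f)\,d\mu$, while continuity of $\alpha$ sends $\alpha(I_{F,\varepsilon})\to\alpha(I)$, proving (a). For a continuous seminorm $p$, the estimate $p(I_{F,\varepsilon})\le\int p(s_{F,\varepsilon})\,d\mu\le\int p(f)\,d\mu+\varepsilon$, valid once $p\in F$, passes to the limit and yields (b). The main obstacle I anticipate is Step 1---engineering a single simple function that simultaneously $L^1$-approximates $f$ in every seminorm of $F$, via the max-trick, a measure-truncation to a finite-measure annulus, and a value-truncation to finitely many members of $C_F$---with a secondary point in case (b) of choosing the $c_k$ inside a bounded portion of $V$ so that quasi-completeness has something to act on.
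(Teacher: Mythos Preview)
Your overall strategy matches the paper's: it too constructs an approximating net of simple functions (indexed there by single continuous seminorms $p$, which is equivalent to your $(F,\varepsilon)$ indexing via $p=\varepsilon^{-1}\max_{q\in F}q$), shows the net of integrals is Cauchy, and then handles the three completeness hypotheses in turn. Your Step~1 and the verification of Definition~\ref{def} are essentially as in the paper.

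The gap is in case~(c). Your $s_{F,\varepsilon}$ is supported on $\bigcup_{k\le K}A_k\subset E\subsetneq X$, so after normalizing $\mu(X)=1$ one has $\sum_k\mu(A_k)<1$ in general, and hence $I_{F,\varepsilon}=\sum_k\mu(A_k)c_k$ lies only in the convex hull of $f(X)\cup\{0\}$, not in the convex hull of $f(X)$; the hypothesis gives completeness of the closure of the \emph{latter}, so your phrase ``a complete set by hypothesis'' is unjustified. The paper's remedy is to arrange that the approximating simple functions are supported on all of $X$: since $\mu(X)<\infty$ one can drop the annulus truncation, let the sets $A_k$ partition $X$, and fill the finite tail with a single value from $f(X)$; then each integral is a genuine convex combination of points of $f(X)$ and lands in the set whose closure is assumed complete. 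A smaller loose end is case~(b): taking $c_k\in f(X\sm N)$ yields only $q(I_{F,\varepsilon})\le M_q\sum_k\mu(A_k)\le M_q\,\mu(E)$, and $\mu(E)$ is not bounded uniformly in $(F,\varepsilon)$ when $\mu(X)=\infty$, so ``confines the whole net to a bounded subset'' is not established by what you wrote; the paper, for its part, is equally terse here and simply declares this case ``clear''.
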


Note that in the Gelfand-Pettis approach of the theorem not only fewer functions are allowed (see below), but under point (c) of the theorem one has to require compactness instead of completeness.

\begin{example}
As an example, consider the case when $X$ is a locally compact Hausdorff space and $\mu$ a Radon measure.
The space $V$ is assumed to be quasi-complete, or even weaker, have the property that the closure of the convex hull of a compact set is complete.
Then any compactly supported continuous map $f:X\to V$ is Bochner-integrable.
We thus get a map
$$
\int_X:C_c(X,V)\to V.
$$
This map is continuous when $C_c(X,V)$ is equipped with the usual inductive limit topology.
\end{example}

Note that the account of \cite{Bourbaki} is explicitly bound to the case of this example, whereas the Bochner construction given here allows the integration of more general functions.
Also in \cite{Edwards} one is restricted to Radon measures and can only integrate functions which are continuous of compact support (up to null-functions).

The proof of the theorem will occupy the rest of the section.

\subsection{Bochner-approximable functions}

\begin{definition}
Let $V$ be a locally convex topological vector space over the complex field.
Let $(X,\mu)$ be a measure space.
A \e{simple function} is a function $s:X\to V$ of the form
$$
s=\sum_{j=1}^n\1_{A_j}v_j
$$
for some measurable sets $A_j\subset X$ of finite measure and some $v_j\in V$.
The \e{integral} of the simple function $s$ equals
$$
\int_X s\,d\mu=\sum_{j=1}^n\mu(A_j)v_j\ \in\ V.
$$
A measurable function $f:X\to V$ is called \e{Bochner-approximable}, if there exists a net $(s_j)_{j\in J}$ of simple functions such that for every continuous seminorm $p$ on $V$ one has
$$
\int_Xp(f-s_j)\,d\mu\ \to\ 0.
$$
In that case the net $(s_j)_j$ is called an \e{approximating net}.

\end{definition}
\begin{lemma}
[net-free formulation]\label{1.1}
A measurable function $f:X\to V$ is Bochner-approximable if and only if for every continuous seminorm $p$ there exists a simple function $s_p$ such that
$$
\int_Xp(f-s_p)\,d\mu\ <\ 1.
$$
\end{lemma}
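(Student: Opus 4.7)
The forward direction is essentially tautological: if $(s_j)_{j\in J}$ is an approximating net for $f$, then for every continuous seminorm $p$ we have $\int_X p(f-s_j)\,d\mu\to 0$, so eventually this integral is less than $1$, and any such $s_j$ serves as the required $s_p$.

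The content is in the reverse direction, where the task is to turn the collection of per-seminorm approximants into a single net that works simultaneously for every continuous seminorm and with arbitrarily small error. The plan is to use the hypothesis not for $p$ itself but for dilated and aggregated seminorms. Let $\mathcal{P}$ denote the family of continuous seminorms on $V$, and take as directed set
$$
J=\{(F,n):F\subset\mathcal{P}\text{ finite, nonempty},\ n\in\N\},
$$
ordered by $(F,n)\le(F',n')$ iff $F\subset F'$ and $n\le n'$; this is directed since unions of finite sets are finite and $\N$ is directed. For each $(F,n)\in J$ the function $q_{F,n}=n\max_{p\in F}p$ is again a continuous seminorm (finite maxima and positive scalings preserve continuity of seminorms), so by hypothesis there exists a simple function $s_{F,n}$ with $\int_X q_{F,n}(f-s_{F,n})\,d\mu<1$.

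To verify that $(s_{F,n})_{(F,n)\in J}$ is an approximating net, fix an arbitrary continuous seminorm $p_0$ and $\eps>0$. Pick $n_0\in\N$ with $1/n_0<\eps$ and set $F_0=\{p_0\}$. For any $(F,n)\ge(F_0,n_0)$ we have $p_0\in F$ and $n\ge n_0$, so pointwise $p_0\le \tfrac{1}{n}q_{F,n}$, and therefore
$$
\int_X p_0(f-s_{F,n})\,d\mu\ \le\ \frac1n\int_X q_{F,n}(f-s_{F,n})\,d\mu\ <\ \frac1n\ \le\ \frac1{n_0}\ <\ \eps.
$$
This shows $\int_X p_0(f-s_j)\,d\mu\to 0$ along the net for every continuous seminorm $p_0$, which is exactly Bochner-approximability.

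There is no real obstacle here; the only subtlety is arranging the directed set so that a \emph{single} net simultaneously handles all continuous seminorms and drives the error to zero, and the device of absorbing the error bound $1/n$ into the seminorm itself (by using $n\max_{p\in F}p$) handles this cleanly.
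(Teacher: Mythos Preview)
Your proof is correct and uses the same core idea as the paper: apply the hypothesis to a dilated seminorm so that the bound $1$ becomes an arbitrary $\eps$. The paper's version is slightly slicker in that it takes the set of \emph{all} continuous seminorms, ordered pointwise, as the directed index set itself (this set is directed since finite maxima of continuous seminorms are again continuous seminorms); then $(s_p)_p$ is already the approximating net, because for any $q\ge\tfrac1\eps p$ one has $\int_X p(f-s_q)\,d\mu\le\eps\int_X q(f-s_q)\,d\mu<\eps$, so your auxiliary product $J=\{(F,n)\}$ is unnecessary.
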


\begin{proof}
If an approximating net exists, the condition in the lemma is obvious.
Now suppose that the condition of the lemma is satisfied. The set of all continuous seminorms has a natural partial order.
Note that $p\le q$ is equivalent to $U_p\supset U_q$.
As every zero neighborhood contains a convex balanced zero neighborhood, the set of continuous seminorms is directed.
So the simple functions $(s_p)_p$ form a net, and this net will do the job.
This follows from the fact that for every continuous seminorm $p$ and any $\eps>0$ the function $\frac1\eps p$ is again a continuous seminorm and one has
\begin{align*}
\int_Xp(f-s_{\frac1\eps p})\,d\mu<\eps.
\tag*\qedhere
\end{align*}
\end{proof}

\begin{theorem}
Let $f:X\to V$ be Bochner-approximable.
Then for each approximating net $(s_j)_{j\in J}$, the net of integrals
$\(\int_X s_j\,d\mu\)_j$ is a Cauchy net.
If this net converges for one approximating net, then it converges for every approximating net and the limit is uniquely a determined vector
 $\int_Xf\,d\mu$ in $V$.
 In that case we say that $f$ is \e{Bochner-integrable}.
 If $f$ is Bochner-intgeralbe, then so is $T(f)=T\circ f$ for every continuous linear map $T:V\to W$ into another locally convex space $W$.
One then has
$$
T\(\int_Xf\,d\mu\)=\int_X T(f)\,d\mu.
$$
For every continuous seminorm $p$ on $V$ one has
$$
p\(\int_Xf\,d\mu\)\ \le\ \int_X p(f)\,d\mu.
$$
In the case $V=\C$, a function is Bochner integrable if and only if it is Lebesgue integrable, in which case the two integrals coincide.
\end{theorem}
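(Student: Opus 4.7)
The plan rests on a single estimate for simple functions: for every continuous seminorm $p$ and every simple function $s=\sum_{j=1}^n\1_{A_j}v_j$, one has $p\left(\int_X s\,d\mu\right)\le \int_X p(s)\,d\mu$. To prove this I would pass to a disjoint refinement of the $A_j$'s, so that $s=\sum_\al \1_{B_\al}w_\al$ with the $B_\al$ pairwise disjoint of finite measure. Then the left-hand side is bounded by $\sum_\al \mu(B_\al)\,p(w_\al)$ by the triangle inequality, while on each $B_\al$ one has $p(s)=p(w_\al)$, so the right-hand side equals $\sum_\al \mu(B_\al)\,p(w_\al)$ exactly.

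Everything else reduces to bookkeeping on top of this estimate. For the Cauchy property, apply it to the simple function $s_j-s_k$ to get
$$
p\left(\int_X s_j\,d\mu-\int_X s_k\,d\mu\right)\le\int_X p(s_j-s_k)\,d\mu\le\int_X p(f-s_j)\,d\mu+\int_X p(f-s_k)\,d\mu,
$$
which tends to zero along the approximating net. The same manipulation applied to two approximating nets $(s_j)$ and $(t_k)$ shows that $\int_X s_j\,d\mu-\int_X t_k\,d\mu$ tends to zero in every continuous seminorm; hence if $\int_X s_j\,d\mu\to a$ for one net, a three-term estimate forces $\int_X t_k\,d\mu\to a$ as well, yielding both the independence of the approximating net and the uniqueness of the limit.

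For compatibility with a continuous linear map $T:V\to W$, observe that whenever $q$ is a continuous seminorm on $W$, $q\circ T$ is a continuous seminorm on $V$, so $\int_X q(Tf-Ts_j)\,d\mu=\int_X(q\circ T)(f-s_j)\,d\mu\to 0$; hence $(T\circ s_j)$ is an approximating net for $T\circ f$. By linearity on simple functions, $\int_X T(s_j)\,d\mu=T\left(\int_X s_j\,d\mu\right)$, and continuity of $T$ gives convergence of this to $T\left(\int_X f\,d\mu\right)$, so $T\circ f$ is Bochner-integrable with the claimed integral. For the seminorm bound, apply the simple-function estimate to $s_j$ to get $p\left(\int_X s_j\,d\mu\right)\le\int_X p(s_j)\,d\mu\le\int_X p(f)\,d\mu+\int_X p(f-s_j)\,d\mu$, where finiteness of $\int_X p(f)\,d\mu$ follows from $p(f)\le p(s_j)+p(f-s_j)$ for any one fixed $j$. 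Letting $j$ run along the net and using continuity of $p$ yields $p\left(\int_X f\,d\mu\right)\le\int_X p(f)\,d\mu$.

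For the scalar case $V=\C$, every Lebesgue-integrable function is approximable in $L^1$-norm by simple functions (split into positive/negative real/imaginary parts and use the standard dyadic truncation), which furnishes an approximating sequence whose simple-function integrals are literal Lebesgue integrals. Conversely, Bochner-approximability forces $\int_X|f|\,d\mu<\infty$ by the seminorm bound applied with $p=|\cdot|$, so $f\in L^1$ and the two notions of integral coincide. The main obstacle, modest though it is, is the initial simple-function estimate: the bound $p\left(\int_X s\,d\mu\right)\le\int_X p(s)\,d\mu$ must be shown to be independent of the chosen representation of $s$, and the cleanest argument appears only after passing to a disjoint representation.
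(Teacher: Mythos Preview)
Your proposal is correct and follows essentially the same route as the paper: the Cauchy estimate via $p(\int s_i-\int s_j)\le\int p(s_i-f)+\int p(f-s_j)$, the pushforward of the approximating net along $T$, and the seminorm bound via $p(\int s_j)\le\int p(s_j)$ combined with $|p(s_j)-p(f)|\le p(s_j-f)$. Your only cosmetic deviations are that you isolate the simple-function inequality $p(\int s)\le\int p(s)$ at the outset (the paper uses it without comment), and you observe directly that $q\circ T$ is itself a continuous seminorm on $V$ rather than invoking, as the paper does, the existence of some $p$ with $q(Tv)\le p(v)$; these are equivalent formulations.
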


\begin{proof}
For a continuous seminorm $p$ we have
\begin{eqnarray*}
p\(\int_Xs_i\,d\mu-\int_Xs_j\,d\mu\)
&\le& \int_Xp(s_i-s_j)\,d\mu\\
&\le&\int_Xp(s_i-f)\,d\mu+\int_Xp(f-s_j)\,d\mu.
\end{eqnarray*}
This implies that the net of integrals is a Cauchy net.
By the usual argument, the limit does not depend on the choice of the net.
Let $T$ and $(s_j)$ be as in the theorem.
We claim that  $T\circ s_j=T(s_j)$ is a net of simple functions in $W$ which approximates  $T(f)$.
For this let $q$ be a continuous seminorm on $W$.
As $T$ is continuous, there exists a continuous seminorm $p$ on $V$ such that $q(T(v))\le p(v)$ for every $v\in V$.
We conclude
$$
\int_Xq(T(f)-T(s_j))\,d\mu\le\int_Xp(f-s_j)\,d\mu.
$$
So $T(s_j)$ indeed approximates $T(f)$ and so
\begin{eqnarray*}
T\(\int_Xf\,d\mu\) &=& T\(\lim_j\int_Xs_j\,d\mu\)\\
&=& \lim_j\int_XT(s_j)\,d\mu= \int_XT(f)\,d\mu.
\end{eqnarray*}
The assertion about the case $V=\C$ is easy.
For a continuous seminorm $p$ we have
\begin{eqnarray*}
p\(\int_Xf\,d\mu\) &=& p\(\lim_j\int_Xs_j\,d\mu\)\\
&=& \lim_jp\(\int_Xs_j\,d\mu\)\ \le\  \liminf_j\int_Xp(s_j)\,d\mu.
\end{eqnarray*}
Let $\eps>0$.
There exists $j_0$ such that for every $j\ge j_0$ one has $\int_Xp(s_j-f)\,d\mu<\eps$.
As $|p(s_j)-p(f)|\le p(s_j-f)$ we conclude  $p\(\int_Xf\,d\mu\)<\int_Xp(f)\,d\mu+\eps$.
For $\eps\to 0$ the claim follows.
\end{proof}

\subsection{Integrable functions}

\begin{lemma}\label{2.1}
If $f$ is essentially separable, then for each continuous seminorm $p$, the set $C_p$ can be chosen inside the image $f(X)$.
\end{lemma}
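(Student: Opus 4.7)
The plan is to replace each element of the given countable $p$-approximant $C_p$ by a nearby element of $f(X)$ via a straightforward countable selection argument, keeping the same exceptional null set $N_p$.

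First, take the $N_p$ and $C_p \subset V$ supplied by the definition of essential separability, so that $f(X\sm N_p) \subset \ol{C_p}^{(p)}$. For each pair $(c,n) \in C_p \times \N$ such that the $p$-open ball $B_p(c,1/n) = \{v \in V : p(v-c) < 1/n\}$ meets $f(X)$, pick one point $y_{c,n} \in B_p(c,1/n) \cap f(X)$; let $C_p'$ be the (countable) collection of all these $y_{c,n}$. By construction $C_p' \subset f(X)$.

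Next, I would verify that $C_p'$ still serves as a $p$-approximant for $f$ off $N_p$. Given $x \in X \sm N_p$ and $n \in \N$, the hypothesis $f(x) \in \ol{C_p}^{(p)}$ produces some $c_n \in C_p$ with $p(f(x) - c_n) < 1/(2n)$. The point $f(x)$ itself witnesses that $B_p(c_n, 1/(2n))$ meets $f(X)$, so a point $y_{c_n,2n} \in C_p'$ was selected with $p(y_{c_n,2n} - c_n) < 1/(2n)$, and the triangle inequality for the seminorm $p$ gives $p(f(x) - y_{c_n,2n}) < 1/n$. Letting $n \to \infty$ shows $f(x) \in \ol{C_p'}^{(p)}$, hence $f(X \sm N_p) \subset \ol{C_p'}^{(p)}$, with the same null set $N_p$.

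I do not foresee any genuine obstacle here: this is essentially a standard density/selection argument, and the only minor subtlety is that $p$ is a seminorm rather than a norm, which does not affect the triangle-inequality estimate above. The only choice being made is countable (one $y_{c,n}$ per non-empty intersection), so no stronger form of choice is required.
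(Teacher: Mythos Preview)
Your argument is correct and is essentially identical to the paper's own proof: the paper also selects, for each $c\in C_p$ and $n\in\N$ with $(c+\tfrac1n U_p)\cap f(X)\ne\emptyset$, a point $y(c,n)$ in that intersection, and then uses the same $\tfrac1{2n}$ triangle-inequality estimate to show $f(X\sm N_p)$ lies in the $p$-closure of the resulting countable set. If anything, your write-up is slightly more careful in explicitly restricting to $X\sm N_p$ and noting that only countable choice is used.
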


\begin{proof}
Let $c\in\C_p$ and let $n\in\N$.
If $(c+\frac1nU_p)\cap f(X)\ne\emptyset$, we choose an element $y(c,n)$ in that set.
Let $D_p$ be the union of all these elements $y(c,n)$.
We claim that $f(X)\subset\ol{D_P}^{(p)}$.
To prove this, let $x\in X$ and $n\in\N$.
There exists $c\in C_p$ with $p(f(x)-c)<\frac1{2n}$.
So $(c+\frac1{2n}U_p)\cap f(X)\ne\emptyset$, i.e., the element $y(c,2n)\in f(X)$ with $p(y(c,2n)-c)<\frac1{2n}$ exists.
It follows that $p(y(c,n)-f(x))<\frac1n$.
\end{proof}

The next theorem generalizes Theorems 3 and 6 of \cite{Thomas}.

\begin{theorem}\label{2.2}
A measurable function $f:X\to V$ is Bochner-approximable if and only if
\begin{enumerate}[\rm (a)]
\item $f$ is essentially separable and
\item $f$ is integrally bounded.
\end{enumerate}
If $f(X)$ is relatively compact and $\mu(X)<\infty$, then $f$ is Bochner-approximable.
\end{theorem}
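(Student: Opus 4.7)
My plan hinges on Lemma \ref{1.1}: for the ``if'' direction I need only exhibit, for every continuous seminorm $p$ and every $\varepsilon>0$, a single simple function $s$ with $\int_X p(f-s)\,d\mu<\varepsilon$; for the ``only if'' direction I will extract a sequence of simple functions approximating $f$ well in $p$ and apply a standard almost-everywhere argument.

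For the forward implication, suppose $f$ is Bochner-approximable with approximating net $(s_j)_{j\in J}$. Integral boundedness is immediate: I pick $j$ with $\int_X p(f-s_j)\,d\mu<1$, so $\int_X p(f)\,d\mu\le\int_X p(s_j)\,d\mu+1<\infty$ since $s_j$ is simple. For essential separability, I fix $p$ and apply the net condition to the continuous seminorm $2^np$ to choose simple functions $\sigma_n$ with $\int_X p(f-\sigma_n)\,d\mu<2^{-n}$. Then $\sum_n p(f-\sigma_n)$ has integral at most $1$ (by monotone convergence), hence is a.e.\ finite, so $p(f-\sigma_n)\to 0$ outside a null set $N_p$; taking $C_p$ to be the countable union of the (finite) ranges of the $\sigma_n$ gives $f(X\sm N_p)\subset\ol{C_p}^{(p)}$.

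For the reverse implication, I fix $p$ and $\varepsilon>0$ and, by Lemma \ref{2.1}, take $C_p=\{c_1,c_2,\dots\}\subset f(X)$ with a null set $N_p$ and $f(X\sm N_p)\subset\ol{C_p}^{(p)}$. For $x\in X\sm N_p$ and $n\in\N$ let $k(x,n)$ be the least $k$ with $p(f(x)-c_k)<1/n$, and set $A_k^{(n)}=\{x\in X\sm N_p:k(x,n)=k\}$; these are measurable, partition $X\sm N_p$, and satisfy $p(f-c_k)<1/n$ on $A_k^{(n)}$. Since $\int_X p(f)\,d\mu<\infty$, the set $\{p(f)>0\}$ is $\sigma$-finite, so I pick a measurable $E$ of finite measure with $\int_{X\sm E}p(f)\,d\mu<\varepsilon/3$, then an $n$ with $\mu(E)/n<\varepsilon/3$, then (by dominated convergence, since the integrand is bounded by $p(f)\in L^1$ and vanishes pointwise as $N\to\infty$) an $N$ with $\int_{E\cap\bigcup_{k>N}A_k^{(n)}}p(f)\,d\mu<\varepsilon/3$. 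The function $s=\sum_{k\le N}c_k\1_{A_k^{(n)}\cap E}$ is then simple, and splitting $X$ into $X\sm E$, $E\cap\bigcup_{k>N}A_k^{(n)}$, and $\bigcup_{k\le N}(E\cap A_k^{(n)})$ bounds $\int_X p(f-s)\,d\mu$ by $\varepsilon/3+\varepsilon/3+\mu(E)/n<\varepsilon$.

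The final sentence of the theorem is immediate: relative compactness of $f(X)$ delivers essential separability by the example preceding Theorem \ref{Thm1.1.9}, and a compact set in $V$ is bounded in every continuous seminorm, so $\int_X p(f)\,d\mu\le\bigl(\sup_{x\in X}p(f(x))\bigr)\mu(X)<\infty$. The obstacle I expect to wrestle with most is the $\sigma$-finiteness issue in the reverse direction: since the pieces $A_k^{(n)}$ may have infinite measure, the naive infinite combination $\sum_k c_k\1_{A_k^{(n)}}$ is not a simple function, which forces the three-part truncation to a finite-measure set $E$ carrying essentially all of the $p$-mass of $f$.
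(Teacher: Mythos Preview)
Your proof is correct and follows the same overall strategy as the paper: both directions ultimately rest on Lemma \ref{1.1}, and in the reverse direction both build simple functions from the countable dense set via a ``first index that approximates'' rule, then conclude by dominated convergence.

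The one noteworthy technical difference is how you handle the infinite-measure issue in the reverse direction. You introduce an auxiliary finite-measure set $E$ carrying almost all of the $p$-mass of $f$, and then perform a three-way split $(X\sm E)\cup\bigl(E\cap\bigcup_{k>N}A_k^{(n)}\bigr)\cup\bigl(E\cap\bigcup_{k\le N}A_k^{(n)}\bigr)$. The paper instead bakes the truncation into the definition of the approximating pieces: it sets $A_n^\delta=\{x:p(f(x))>\delta,\ p(f(x)-c_n)<\delta\}$, disjointifies to $D_n^\delta$, and takes $s_{p,n}=\sum_{j\le n}\1_{D_j^{1/n}}c_j$. The threshold $p(f)>1/n$ simultaneously forces finite measure (since $p(f)\in L^1$) and yields the pointwise bound $p(s_{p,n})\le 2p(f)$, so a single dominated-convergence step finishes. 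Your argument is a bit more explicit and perhaps more transparent about where $\sigma$-finiteness enters; the paper's is more economical. Either way the invocation of Lemma \ref{2.1} in your argument is harmless but unnecessary---you never actually use $C_p\subset f(X)$.

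For the forward direction your Borel--Cantelli style argument (choose $\sigma_n$ with $\int_X p(f-\sigma_n)\,d\mu<2^{-n}$, sum, and conclude a.e.\ convergence) is a clean variant of the paper's Markov-inequality approach; both give essential separability with the same countable set, namely the union of the ranges of the $\sigma_n$.
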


\begin{proof}
Let $f$ be Bochner-approximable.
We show that $f$ is essentially separable.
So let $p$ be a continuous seminorm.
For each given $n\in \N$, there exists a simple function $s_n:X\to V$ with $\int_Xp(f-s_n)\,d\mu<\frac1n$.
Let $E_p$ be the $p$-closure of the vector space spanned by the union of the images of all $s_n$, $n\in\N$.
Then $E_p$ is the $p$-closure of some countable set $C_p$, for instance, one can take the $\Q(i)$-vector space spanned by the images of all $s_n$.
For each $n\in\N$ the set
$$
N_n=\left\{ x\in X:p(f(x),E_p)>\frac1n\right\}
$$
is a set of measure zero, where 
$$
p(v,E_p)=\inf\{p(v-e):e\in E_p\}.
$$
The complement in $X$ of the set $f^{-1}(E_p)$ is the union of all $N_n$, therefore a set of measure zero, so $f$ is essentially separable.
As $\int_X p(f-s_j)\,d\mu<\infty$ it follows that $\int_Xp(f)\,d\mu\le\int_Xp(f-s_j)+p(s_j)\,d\mu<\infty$, so $p(f)$ is integrable.

Now for the converse direction.
Let $p$ be a continuous seminorm.
We will attach to $p$ a simple function $s_p$ with $\int_Xp(f-s_p)\,d\mu<1$.
Then $f$ is Bochner-approximable by Lemma \ref{1.1}.
In order to construct $s_p$, let $C_p=\{ c_1,c_2,\dots\}$ be the countable set  and let $N_p\subset X$ be the nullset attached to $p$.
Write $X_p=X\sm N_p$.
For $n\in\N$ and $\delta>0$ let $A_n^\delta$ be the set of all $x\in X_p$ such that $p(f(x))>\delta$ and $p(f(x)-c_n)<\delta$.
To have a sequence of pairwise disjoint sets, define
$$
D_n^\delta= A_m^\delta\sm \bigcup_{k<n}A_k^\delta.
$$
The set $\bigcup_nA_n^\delta=\bigcupdot_nD_n^\delta$ equals $f^{-1}(f(X_p)\sm \delta U_p)$.
Since $p(f)$ is integrable, the set $\bigcupdot D_n^\delta$ is of finite measure.
Let $s_{p,n}=\sum_{j=1}^n\1_{D_j^{1/n}}c_j$.
This is a simple function.
It is easy to see that the sequence $p(s_{p,n}-f)$ converges to $0$ pointwise on the set $X_p$.
On that set we also have $p(s_{p,n})\le 2p(f)$ by construction.
So we get $p(f-s_{p,n})\le p(f)+p(s_{p,n})\le 3p(f)$, and by dominated convergence,
$$
\int_Xp(f-s_{p,n})\,d\mu\ \to\ 0.
$$
In particular, there exists $n_0\in\N$, such that for $s_p=s_{p,n_0}$ one has $\int_Xp(f-s_p)\,d\mu<1$.

Finally, assume that $f(X)$ is relatively compact and $\mu(X)<\infty$. Then for any given continuous seminorm $p$, the set $p(f(X))$ is relatively compact, hence bounded, so $p(f)$ is integrable since $\mu(X)<\infty$.
Further, $f$ is essentially separable by Example 2.1.
\end{proof}

\begin{theorem}\label{2.4}
\renewcommand{\labelenumi}{\rm(\roman{enumi})}
\begin{enumerate}
\item If $V$ is complete, then every Bochner-approximable function in $V$ is Bochner-integrable.
\item
If $V$ is quasi-complete, then every bounded Bochner-approximable function in $V$ is Bochner-integrable.
\item
Let $f:X\to V$ be Bochner-approximable.
If $\mu(X)<\infty$ and the closure of the convex hull of $f(X)$ is complete, then $f$ is Bochner-integrable.
\end{enumerate}
\renewcommand{\labelenumi}{\arabic{enumi}}
\end{theorem}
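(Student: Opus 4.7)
The three parts share a common structure. By the (unnamed) theorem of Subsection 2.2, for any approximating net $(s_j)_j$ of a Bochner-approximable $f$ the net of integrals $\(\int_X s_j\,d\mu\)_j$ is a Cauchy net in $V$. It therefore suffices in each case to exhibit an approximating net whose integrals lie in some complete subset of $V$. Part (i) is then immediate: with $V$ itself complete, every Cauchy net in $V$ converges, so any approximating net supplied by Theorem \ref{2.2} does the job.

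For part (ii), I would pick $(s_p)_p$ via Lemma \ref{1.1}, indexed by continuous seminorms, with $\int_X p(f-s_p)\,d\mu<1$ for each $p$. For any other seminorm $q$ and any $p\ge q$ (so $q\le p$ pointwise, hence $q(f-s_p)\le p(f-s_p)$), one estimates
$$
q\!\left(\int_X s_p\,d\mu\right)\ \le\ \int_X q(s_p)\,d\mu\ \le\ \int_X q(f)\,d\mu+\int_X q(f-s_p)\,d\mu\ \le\ \int_X q(f)\,d\mu+1.
$$
Thus for every $q$ the tail $\{p:p\ge q\}$ of the net of integrals is $q$-bounded. Promoting this to a cofinal subnet that is bounded in \emph{every} seminorm simultaneously is where (essential) boundedness of $f$ enters: by Lemma \ref{2.1} one may take $C_p\subset f(X)$, and boundedness of $f$ then locks the values of every $s_p$ into one bounded subset of $V$ (mod null sets), independent of $p$. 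Quasi-completeness then delivers convergence of the resulting bounded Cauchy subnet.

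For part (iii), again apply Lemma \ref{2.1} to arrange $C_p\subset f(X)$, so the approximants $s_{p,n}$ constructed in the proof of Theorem \ref{2.2} take values in $f(X)\cup\{0\}$. Because $\mu(X)<\infty$ the weights $\mu(D_j^{1/n})$ sum to at most $\mu(X)$, whence
$$
\int_X s_{p,n}\,d\mu\ =\ \sum_j\mu(D_j^{1/n})\,c_j\ \in\ \mu(X)\cdot\overline{\operatorname{conv}(f(X)\cup\{0\})}.
$$
A small modification---extending each $s_{p,n}$ to a partition of $X$ by putting a fixed $c_0\in f(X)$ on the complement $B_0$ of its support, and checking via the dominated-convergence argument of Theorem \ref{2.2} that the extra error $\int_{B_0}p(f-c_0)\,d\mu$ remains negligible---places the integrals inside $\mu(X)\cdot\overline{\operatorname{conv}(f(X))}$, a closed subset of a complete set. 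The Cauchy net of integrals converges there, giving the Bochner integral.

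The technical heart of the argument lies in part (ii): passing from ``$q$-bounded on some tail, for each individual $q$'' to a single cofinal subnet bounded in all seminorms at once. This is the obstacle that prevents part (i)'s easy argument from extending, and (essential) boundedness of $f$ is precisely what lets one overcome it, by corralling the values of all approximants into one common bounded subset of $V$ rather than a $p$-dependent one.
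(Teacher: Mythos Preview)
For (i) and (iii) you follow the paper's line. In (iii) the paper normalizes $\mu(X)=1$ and simply asserts that the approximating simple functions coming from Theorem~\ref{2.2} can be written as $\sum_k\1_{A_k}v_k$ with $v_k\in f(X)$ and $X=\bigcupdot_kA_k$, so that $\int s_j$ is an honest convex combination of points of $f(X)$. You are more careful about the partition issue and propose the right repair; the only detail you elide is that the constant $c_0\in f(X)$ placed on the leftover set $B_0$ must be chosen (depending on $p$) with $p(c_0)=0$ whenever $\mu(\{p(f)=0\})>0$ --- and such a $c_0$ is then available in $f(X)$ --- else $\int_{B_0}p(f-c_0)\,d\mu$ need not tend to zero. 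With that adjustment (iii) goes through.

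Part (ii), however, has a genuine gap. You correctly isolate the difficulty: the estimate $q(\int s_p)\le\int q(f)+1$ for $p\ge q$ only gives, for each individual $q$, a $q$-bounded \emph{tail}, not one subnet bounded in all seminorms at once. But your remedy does not close it. Arranging $C_p\subset f(X)$ via Lemma~\ref{2.1} forces the \emph{function values} $s_p(x)$ into the bounded set $f(X)\cup\{0\}$; it says nothing about the \emph{integrals} $\int_X s_p\,d\mu=\sum_j\mu(D_j)c_j$, because the total mass $\sum_j\mu(D_j)\le\mu(\{x:p(f(x))>1/n\})$ depends on $p$ and $n$ and is not uniformly controlled when $\mu(X)=\infty$. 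Bounded values do not yield bounded integrals here, so quasi-completeness cannot be invoked on the net you describe. (The paper itself offers nothing for (ii) beyond the word ``clear''; completing the argument needs either a different construction of the approximants with uniform control on the support measure, or a direct argument that the limit taken in the completion $\widehat V$ already lies in $V$.)
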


\begin{proof} (i) and (ii) are clear.
For (iii) we may assume $\mu(X)=1$.
By Lemma \ref{2.1} the set $C_p$ can be chosen inside $f(X)$.
According to the proof of Theorem \ref{2.2} there exists an approximating net $(s_j)_{j\in J}$ such that each $s_j$ takes values in the sets $C_p$ for varying $p$, hence $s_j(X)\subset f(X)$.
Now write
$$
s_j=\sum_{k=1}^n\1_{A_k}v_k,
$$
then each $v_k$ lies in $f(X)$ and $X=\bigcupdot_kA_k$.
Therefore,
$$
\int_Xs_j\,d\mu=\sum_{k=1}^n\mu(A_k)v_k
$$
is a convex-combination of elements of $f(X)$, so lies in the convex hull of $f(X)$.
The closure of this convex hull being complete, the Cauchy-net $s_j$  converges.
\end{proof}

\section{Traces and Determinants}
An operator $T$ on a Hilbert space $H$ is called a \e{trace class operator} if its singular values are summable \cites{Simon,HA2}.
In this section we find that this is equivalent with the net of ``principal trace minors'' being convergent.
We take that as a motivation to then define the notion of a \e{determinant class operator} to be one for which the determinantal principal minors converge.
It turns out that in the normal case these are exactly those opertors $A$, for which $1-A$ is of trace class.
In the non-normal case the correspoinding assertion remains an open question.

\subsection{Trace class}
Let $T:H\to H$ be a bounded operator on a Hilbert space $H$.
For a finite-dimensional subspace $F\subset H$ let $T_F:F\to F$ be given by
$$
T_F:F\hookrightarrow H\tto TH\twoheadrightarrow F,
$$
where the first arrow is the inclusion of $F$ in $H$ and the last one is orthogonal projection $\Pr_F$ onto $F$.

\begin{theorem}\label{thm1.1}
Let $T:H\to H$ be a bounded operator on a Hilbert space $H$.
Then $T$ is trace class if and only if the limit of ``principal trace minors'':
$$
\lim_F\tr(T_F)
$$
exists.
In this case the limit equals the trace of $T$.
\end{theorem}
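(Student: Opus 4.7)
For the forward direction I would use that finite-rank operators are trace-norm dense in the trace class. Given $\varepsilon>0$ and trace class $T$, choose a finite-rank $K$ with $K=\Pr_{F_0}K\Pr_{F_0}$ for some finite-dimensional $F_0\subset H$ and $\|T-K\|_1<\varepsilon$. Since the trace class is a two-sided ideal with $\|PSP\|_1\le\|S\|_1$ for orthogonal projections $P$, for every $F\supset F_0$ one has
$$
\tr(T_F)=\tr(\Pr_F T\Pr_F)=\tr(K)+\tr(\Pr_F(T-K)\Pr_F),
$$
and the last term has absolute value at most $\varepsilon$. Combined with $|\tr(T)-\tr(K)|\le\|T-K\|_1<\varepsilon$, this yields $|\tr(T_F)-\tr(T)|<2\varepsilon$ on the cofinal family $\{F\supset F_0\}$, so $\lim_F\tr(T_F)$ exists and equals $\tr(T)$.

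For the converse I would first extract from net convergence the Cauchy estimate: for every $\varepsilon>0$ there is a finite-dimensional $F_0$ with $|\tr(T_{F_1})-\tr(T_{F_2})|<\varepsilon$ whenever $F_1,F_2\supset F_0$. Taking $F_1=F_0$ and $F_2=F_0\oplus F'$ with $F'\perp F_0$ finite-dimensional, and computing in an ONB adapted to the orthogonal decomposition, yields the crucial bound $|\tr(T_{F'})|\le\varepsilon$ for every finite-dimensional $F'\subset F_0^\perp$. The decomposition $T=A+iB$ with $A=(T+T^*)/2$ and $B=(T-T^*)/(2i)$ then reduces to the self-adjoint case, since $A_F,B_F$ are self-adjoint on $F$, so $\Re\tr(T_F)=\tr(A_F)$ and $\Im\tr(T_F)=\tr(B_F)$ converge separately and satisfy the same estimate.

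For self-adjoint $S$ satisfying this bound, I would apply the bounded spectral theorem to the compression $S':=\Pr_{F_0^\perp}S\Pr_{F_0^\perp}$ on $F_0^\perp$ to split $F_0^\perp=H_+\oplus H_-$ into its non-negative and negative spectral subspaces. On $H_+$, $S'$ is positive, so testing the estimate on an exhaustion of $H_+$ along an ONB $(e_i)$ yields $\sum_i\langle S'e_i,e_i\rangle\le\varepsilon$, which means $S'|_{H_+}$ is trace class of norm at most $\varepsilon$; the same argument applied to $-S'$ on $H_-$ completes the split. The remaining block pieces $\Pr_{F_0}S$ and $\Pr_{F_0^\perp}S\Pr_{F_0}$ factor through the finite-dimensional $F_0$ and are therefore finite rank, so $S$ is trace class; applied to $A$ and $B$ this gives $T$ trace class, and the forward direction then identifies the limit with $\tr(T)$. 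The main obstacle is this spectral-split step: $S'$ is only assumed bounded, not compact, so the positive/negative decomposition rests on the spectral theorem for bounded self-adjoint operators rather than on an eigenvalue decomposition, and one must verify that the estimate imposed only on finite-dimensional $F'\subset F_0^\perp$ is strong enough to force the compression to the possibly infinite-dimensional spectral subspace $H_+$ to be trace class; once that is set up, the reassembly of $T$ from its four block pieces is routine.
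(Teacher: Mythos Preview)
Your argument is correct, and it follows a genuinely different route from the paper's.

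For the forward direction the paper reduces first to self-adjoint and then to positive $T$ via $T=T_+-T_-$, and for positive trace-class $T$ uses the eigenvalue expansion directly. Your approximation by a finite-rank $K=\Pr_{F_0}K\Pr_{F_0}$ together with the ideal inequality $\|\Pr_F(T-K)\Pr_F\|_1\le\|T-K\|_1$ is shorter and uses no spectral theory at all.

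For the converse both arguments pass to the self-adjoint parts $A,B$ via $\Re\tr(T_F)=\tr(A_F)$, $\Im\tr(T_F)=\tr(B_F)$. After that the paper reduces once more to the positive case and shows, for positive $T$, that each spectral subspace $\mu_T(\alpha,\infty)H$ is finite-dimensional, forcing compactness and then trace class. Your route is different: from the Cauchy condition you extract the tail bound $|\tr(S_{F'})|\le\varepsilon$ for all finite-dimensional $F'\subset F_0^\perp$, compress $S$ to $S'=\Pr_{F_0^\perp}S\Pr_{F_0^\perp}$, and split $S'$ spectrally into $H_+\oplus H_-$. The worry you raise about this step is not an obstacle: since $H_\pm$ are $S'$-invariant, for $F'\subset H_+$ one has $(S'|_{H_+})_{F'}=S'_{F'}=S_{F'}$, so the bound applies verbatim; positivity of $S'|_{H_+}$ makes the diagonal sums $\sum_{i\le n}\langle S'e_i,e_i\rangle$ nonnegative and monotone in $n$, hence bounded by $\varepsilon$, which is exactly what is needed for $S'|_{H_+}$ to be trace class. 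The block reassembly with the finite-rank corner pieces is then routine, as you say.

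What each approach buys: your forward direction is cleaner, and your converse is arguably more robust, since it avoids the paper's rather brisk passage from self-adjoint to positive (where one would otherwise need to argue that convergence of $\tr(T_F)$ forces separate convergence of $\tr((T_\pm)_F)$). The paper's converse, on the other hand, yields the extra structural information that $T$ is compact as an intermediate step.
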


\begin{proof}
Let $T:H\to H$ be a bounded operator and write $\tau(T)$ for the limit in the theorem.
Setting $A=\frac12(T+T^*)$ and $B=\frac1{2i}(T-T*)$, the operators $A$ and $B$ are bounded self-adjoint and $T=A+iB$. 
Now $T$ is trace class if and only if $A$ and $B$ are. We first show that $\tau(T)$ exists if and only if $\tau(A)$ and $\tau(B)$ exists.
Let $F\subset H$ be a finite-dimensional subspace.
Let $e_1,\dots,e_n$ be an orthonormal basis of $F$, then
$$
\tr(T_F)=\sum_{j=1}^n\sp{Te_j,e_j},
$$
from which we deduce
$$
\tr\((T^*)_F\)=\ol{\tr(T_F)}.
$$
It follows that if $\tau(T)$ exists, then $\tau(T^*)$ exists and equals $\ol{\tau(T)}$.
Further it follows that if $\tau(T)$ exists, then so do $\tau(A)$ and $\tau(B)$ and vice versa.
So, in order to prove the theorem, it suffices to assume that $T$ is self-adjoint.
Then, by the spectral theorem, there exists positive operators $T_+$ and $T_-$, commuting with $T$ and each other such that $T=T_+-T_-$, which means that it suffices to prove the theorem in the case that $T$ is positive.

Assume first that $T$ is trace class and let $\la_1\ge\la_2\ge\dots\ge 0$ be its eigenvalues counted with multiplicities.
For given $\eps>0$ there exists $N\in\N$ such that $\sum_{j=N+1}^\infty\la_j<\eps$.
Let $F_0$ be the span of $v_1,\dots,v_N$ where the $v_j$ are linearly independent and satisfy $Tv_j=\la_jv_j$.
Let $F\supset F_0$ be a finite-dimensional subspace of $H$, then
$$
0\le\tr(T)-\tr(T_F)\le\tr(T)-\tr(T_{F_0})=\sum_{j=N+1}^\infty\la_j<\eps.
$$
This means that $\tr(T_F)$ converges to $\tr(T)$.

Now for the other direction assume that $T\ge 0$ and the limit of $\tr(T_F)$ exists.
Let $\al>0$ and let $W$ be the image of the spectral projection $\mu_T(\al,\infty)$, where $\mu_T$ is the spectral measure of $T$.
This means that $W$ and $W^\perp$ are stable under $T$, that $T$ has norm $\le\al$ on $W^\perp$ and $\sp{Tw,w}\ge \al\sp{w,w}$ holds for every $w\in W$.
The existence of $\lim_F\tr(T_F)$ implies  that $W$ must be finite-dimensional, hence $T$ diagonalizes on $W$ and by letting $\al$ tend to zero we find that $T$ is compact and as the limit of the traces exists, $T$ must be trace class.
\end{proof}

\subsection{Determinant class}
Let $A:H\to H$ be a bounded operator on a Hilbert space $H$.
We say that $A$ is of \e{determinant class}, if the limit of principal minors
$$
\det(A)\df\lim_F\det(A_F)
$$
exists in $\C$ and is $\ne 0$, where the limit is extended over the net $\det(A_F)$ of complex numbers indexed by the directed set of all finite-dimensional subspaces $F$ of $H$.

Let $T$ be a trace class operator and recall the \e{Fredholm determinant}
$$
\det(1-T)=\sum_{k=0}^\infty (-1)^k\tr\(\bigwedge^kT\),
$$
where the sum converges absolutely, moreover, one has
$$
\sum_{k=0}^\infty \norm{\bigwedge^kT}_{\tr}<\infty,
$$
where $\norm._{\tr}$ is the \e{trace norm}, see \cite{Simon}.
One always has $|\tr(T)|\le\norm T_{\tr}=\tr(|T|)$ and $\norm{ST}_{\tr}\le\norm S\norm{T}_{\tr}$, where $S$ is any bounded operator.
If $T$ is normal and $\la_1,\dots$ are the eigenvalues counted with multiplicity, then one has
$$
\det(1-T)=\prod_{j=1}^\infty (1-\la_j)
$$
and the product converges absolutely.
If $S,T$ are trace class, then the fredholm determinant of $(1-S)(1-T)$ exists and 
$$
\det(1-S)(1-T)=\det(1-S)\det(1-T).
$$

\begin{proposition}
Let $T$ be a trace class operator, then $A=1-T$ is of determinant class and the determinant equals the Fredholm determinant.
\end{proposition}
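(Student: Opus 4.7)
The plan is to expand each finite-dimensional principal minor as an alternating sum of exterior-power traces and then pass to the limit termwise, dominating the sum by an absolutely summable sequence independent of $F$.

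First, for any finite-dimensional $F\subset H$ one has $A_F=(1-T)_F=I_F-T_F$, so the characteristic-polynomial expansion gives
$$
\det(A_F)=\det(I_F-T_F)=\sum_{k=0}^{\dim F}(-1)^k\tr\(\bigwedge^k T_F\).
$$
A brief calculation using the functoriality $\bigwedge^k(\Pr_F T\Pr_F)=\bigwedge^k\Pr_F\cdot\bigwedge^k T\cdot\bigwedge^k\Pr_F$, the identity $\bigwedge^k\Pr_F=\Pr_{\bigwedge^k F}$, and the cyclicity of the trace identifies $\tr(\bigwedge^k T_F)$ with $\tr(\bigwedge^k T\cdot\Pr_{\bigwedge^k F})$ inside $\bigwedge^k H$.

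For each fixed $k$, the operator $\bigwedge^k T$ is trace class on $\bigwedge^k H$ (as recalled just before the proposition). The projections $\Pr_{\bigwedge^k F}$ converge strongly to the identity on $\bigwedge^k H$, because any pure wedge $v_1\wedge\cdots\wedge v_k$ is fixed by $\Pr_{\bigwedge^k F}$ as soon as $F\supset\{v_1,\ldots,v_k\}$, and the pure wedges span a dense subspace. Writing $\bigwedge^k T$ in its Schmidt decomposition and applying dominated convergence to the resulting series of rank-one summands then shows $\bigwedge^k T\cdot\Pr_{\bigwedge^k F}\to \bigwedge^k T$ in trace norm, hence
$$
\tr\(\bigwedge^k T_F\)\ \longrightarrow\ \tr\(\bigwedge^k T\).
$$

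The step I expect to be the main obstacle is the interchange of the limit over $F$ with the infinite sum over $k$. For this one uses the submultiplicativity $\norm{ST}_{\tr}\le\norm S\,\norm T_{\tr}$ recalled in the excerpt, applied to the sandwich $\bigwedge^k T_F=\Pr_{\bigwedge^k F}\bigwedge^k T\Pr_{\bigwedge^k F}$, to obtain
$$
\left|\tr\(\bigwedge^k T_F\)\right|\ \le\ \norm{\bigwedge^k T_F}_{\tr}\ \le\ \norm{\bigwedge^k T}_{\tr},
$$
and the absolute summability $\sum_k\norm{\bigwedge^k T}_{\tr}<\infty$ to furnish a dominating summable sequence independent of $F$. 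Dominated convergence for series then yields
$$
\lim_F\det(A_F)=\sum_{k=0}^\infty(-1)^k\tr\(\bigwedge^k T\)=\det(1-T),
$$
which is both the existence of the limit and its identification with the Fredholm determinant. For $A$ to qualify as \emph{determinant class} in the sense of the definition one additionally needs this value to be nonzero.
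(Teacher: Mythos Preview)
Your argument is correct and is essentially the paper's proof: the same uniform bound $|\tr(\bigwedge^k T_F)|\le\norm{\bigwedge^k T}_{\tr}$ and the same dominated-convergence interchange of $\lim_F$ with $\sum_k$; the only cosmetic difference is that for the termwise convergence $\tr(\bigwedge^k T_F)\to\tr(\bigwedge^k T)$ the paper invokes Theorem~\ref{thm1.1} on $\bigwedge^k H$ (via cofinality of the subspaces $\bigwedge^k F$) rather than your direct strong-convergence argument. Your closing caveat about the nonzero requirement in the definition of ``determinant class'' is well taken---the paper's proof does not address it either, and indeed $\det(1-T)$ can vanish.
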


\begin{proof}
Let $F\subset H$ be finite-dimensional and let $P_{k,F}$ be the orthogonal projection from $\bigwedge^kH\to\bigwedge^kF$.
Then 
$$
|\tr\(\bigwedge^kT_F\)|\le\norm{\bigwedge^kT_F}_{\tr}
=\norm{P_{k,F}\bigwedge^kT}_{\tr}\le 
\norm{\bigwedge^kT}_{\tr}.
$$
As the directed set of all spaces of the form $\bigwedge^kF$ is strongly cofinal in the set of all finite dimensional subspaces of $\bigwedge H$, it follows
by Theorem \ref{thm1.1}, that the trace $\tr\(\bigwedge^kT_F\)$ converges to $\tr\(\bigwedge^kT\)$ as $F\to H$.
We now argue that the determinant
$$
\det(1-T_F)=\sum_{k=0}^\infty (-1)^k\tr\(\bigwedge^k T_F\)
$$
converges to $\det(1-T)$ by dominated convergence.
Of course there is no theorem of dominated convergence for nets in general, however, if the measure space is countable, there is. We have formulated it in the next lemma.
The proof of the proposition is finished.
\end{proof}

\begin{lemma}
[Dominated convergence for nets on countable spaces]\label{lemDomConv}
Let $I\ni\al\to a_\al$ be a net of sequences $(a_{\al,k})_{k\in\N}$ of complex numbers.
Assume that the net converges pointwise, i.e.,
$
a_{\al,k}\to a_k
$
as $\al\to\infty$ for some $a_k\in\C$.
Assume further that there exists a sequence $g_k\ge 0$ with $|a_{\al,k}|\le g_k$ for all $\al\in I$ and $\sum_{k=1}^\infty g_k<\infty$.
Then the sum $\sum_{k=1}^\infty a_{\al,k}$ converges to $\sum_{k=1}^\infty a_k$ as $\al\to\infty$.
\end{lemma}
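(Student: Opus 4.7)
The plan is to reduce this to a standard $\eps$-argument by truncating the sum at a finite level $N$, exploiting the fact that the countability of the "measure space" $\N$ together with the summability of $g_k$ lets the tail be controlled \emph{uniformly in} $\alpha$. This is precisely the feature that fails for nets on general measure spaces, and it is what makes the lemma work.

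I would begin by observing that passing to the pointwise limit in $|a_{\alpha,k}|\le g_k$ gives $|a_k|\le g_k$, so $\sum_k a_k$ is absolutely convergent and hence the right-hand side of the claimed convergence is a well-defined complex number. Next, given $\eps>0$, choose $N$ with $\sum_{k>N}g_k<\eps/3$. Then for every $\alpha\in I$ one has the uniform tail bounds
\[
\Bigl|\sum_{k>N}a_{\alpha,k}\Bigr|\le\sum_{k>N}g_k<\eps/3,\qquad \Bigl|\sum_{k>N}a_k\Bigr|\le\sum_{k>N}g_k<\eps/3.
\]

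Now I only need to handle the finite initial segment $k=1,\dots,N$. For each such $k$, pointwise convergence gives some $\alpha_k\in I$ with $|a_{\alpha,k}-a_k|<\eps/(3N)$ for all $\alpha\ge\alpha_k$. Since $I$ is directed and the collection $\{\alpha_1,\dots,\alpha_N\}$ is finite, there exists $\alpha_0\in I$ dominating all $\alpha_k$ simultaneously. For $\alpha\ge\alpha_0$ the triangle inequality yields
\[
\Bigl|\sum_{k=1}^\infty a_{\alpha,k}-\sum_{k=1}^\infty a_k\Bigr|\le \sum_{k=1}^N|a_{\alpha,k}-a_k|+\Bigl|\sum_{k>N}a_{\alpha,k}\Bigr|+\Bigl|\sum_{k>N}a_k\Bigr|<\eps,
\]
which is the desired conclusion.

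There is no real obstacle here: the only subtle point is ensuring that a single $\alpha_0$ can be chosen to work for all $k\le N$, and this is where directedness of $I$ is used crucially (and where the argument breaks down if one tries to extend to an uncountable index set where no finite truncation absorbs the whole tail uniformly in $\alpha$).
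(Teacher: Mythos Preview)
Your argument is correct and follows essentially the same approach as the paper's own proof: truncate at a finite level, bound the tail uniformly in $\al$ via $\sum_{k>N}g_k$, and use directedness of $I$ to find a single $\al_0$ controlling the finitely many initial terms. The only differences are cosmetic (you use $\eps/(3N)$ where the paper uses $\eps/2^{k+1}$, and you make the observation $|a_k|\le g_k$ explicit).
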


\begin{proof}
For given $\eps>0$ there is $k_0$ such that $\sum_{k=k_0+1}^\infty g_k<\eps/4$.
Next there exists $\al_0\in I$ such that for all $\al\ge \al_0$ one has
$$
|a_{\al,k}-a_k|<\frac\eps{2^{k+1}}
$$
holds for every $1\le k\le k_0$.
For every $\al\ge \al_0$ it follows that
\begin{align*}
\left|\sum_{k=1}^\infty a_{\al,k}-\sum_{k=1}^\infty a_k\right|&\le\sum_{k=1}^{k_0}|a_{\al,k}-a_k|+\sum_{k=k_0+1}^\infty |a_{\al,k}|+|a_k|\\
&\le \sum_{k=1}^{k_0}\frac{\eps}{2^{k+1}}+2\sum_{k=k_0+1}^\infty g_k\\
&<\frac\eps 2+\frac\eps 2=\eps.\tag*\qedhere
\end{align*}
\end{proof}

\begin{lemma}\label{positiveReduct}
Let $A:H\to H$ be of determinant class and assume that $A$ respects an orthogonal decomposition $H=U\oplus U^\perp$ for some closed subspace $U$.
Then $A_U$ and $A_{U^\perp}$ are of determinant class and one has
$$
\det(A)=\det(A_U)\det(A_{U^\perp}).
$$
As a side result we note that if $\eps>0$ and $F\subset H$ is s finite-dimensional subspace such that $|\det(A_{F'})-\det(A)|<\eps|\det(A)|/2$ for every finite dimensional $F'\supset F$, then we have
$$
|\det(A_{F_U})-\det(A_U)|\le \eps |\det(A_U)|.
$$
\end{lemma}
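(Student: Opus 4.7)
My plan is to exploit the fact that, because $A$ respects $H = U \oplus U^\perp$, the compression $A_F$ is block-diagonal whenever $F$ itself splits as $F = F_1 \oplus F_2$ with $F_1 \subset U$ and $F_2 \subset U^\perp$. Every finite-dimensional $F \subset H$ is contained in $F_U \oplus F_{U^\perp}$, where $F_U, F_{U^\perp}$ denote the orthogonal projections of $F$ onto $U, U^\perp$; so the subnet of such ``product-form'' subspaces is cofinal in the net of all finite-dimensional subspaces, and hence $\det(A_F) \to \det(A)$ along this subnet as well. Setting $u(F_1) := \det((A_U)_{F_1})$ and $v(F_2) := \det((A_{U^\perp})_{F_2})$, one has $\det(A_F) = u(F_1)\,v(F_2)$ on product-form $F$, so the task reduces to separating the limit $u(F_1)\,v(F_2) \to \det(A) \neq 0$ into two individually convergent factors.

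The key step I would isolate is the following general fact: if $(u(i))_{i \in I}$ and $(v(j))_{j \in J}$ are two nets of complex numbers whose product $u(i)\,v(j)$ converges to some $c \neq 0$ along the product-directed set $I \times J$, then $u$ and $v$ each converge and the limits multiply to $c$. To prove this, first pick $(i_0, j_0)$ with $|u(i)v(j) - c| < |c|/2$ for $i \geq i_0$, $j \geq j_0$; this forces $u, v$ to be nonzero on the tail, and, fixing $i = i_0$, confines $|v(j)|$ to an interval $[m, M]$ with $0 < m \leq M < \infty$ for all $j \geq j_0$. Given $\eps > 0$, enlarge $(i_0, j_0)$ to $(i_\eps, j_\eps)$ with $|u(i)v(j) - c| < \eps$ on the new tail; then for $i, i' \geq i_\eps$ and the fixed $j = j_\eps$, the triangle inequality gives $|u(i) - u(i')|\,|v(j_\eps)| < 2\eps$, hence $|u(i) - u(i')| < 2\eps/m$. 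So $u$ is Cauchy, hence convergent, and the same argument yields convergence of $v$.

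Applied to the product-form subnet, this yields the existence of $\det(A_U) := \lim u(F_1)$ and $\det(A_{U^\perp}) := \lim v(F_2)$, both nonzero because their product equals $\det(A) \neq 0$; hence $A_U$ and $A_{U^\perp}$ are of determinant class and $\det(A) = \det(A_U)\det(A_{U^\perp})$. For the side result, given $F$ with the stated $\eps$-property, set $F_U := \Pr_U F$ and $F_{U^\perp} := \Pr_{U^\perp} F$; for every $G \supset F_{U^\perp}$ in the finite-dimensional subspaces of $U^\perp$, the subspace $F' := F_U \oplus G$ contains $F$, so the hypothesis gives $|u(F_U)\,v(G) - \det(A)| < \eps\,|\det(A)|/2$. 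Letting $G \to U^\perp$ and using $v(G) \to \det(A_{U^\perp})$ together with $\det(A) = \det(A_U)\det(A_{U^\perp})$, one divides by $|\det(A_{U^\perp})|$ to obtain $|u(F_U) - \det(A_U)| \leq \eps\,|\det(A_U)|/2 \leq \eps\,|\det(A_U)|$.

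The main obstacle is the separation-of-factors argument: a priori, $u(i)v(j) \to c$ along a product net gives no control of the individual factors, and one can easily construct oscillating examples where neither factor converges while their product does. What rescues the situation here is the nonvanishing $c = \det(A) \neq 0$: it forces both $|u|$ and $|v|$ to stay bounded above and bounded away from zero on an appropriate tail, converting product convergence into genuine Cauchy-ness of each factor separately.
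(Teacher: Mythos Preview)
Your proof is correct and follows essentially the same approach as the paper: both exploit the block-diagonal form of $A_F$ on product subspaces $F=F_1\oplus F_2$ and then separate the factorization $u(F_1)v(F_2)\to\det(A)\ne 0$ into two individually convergent nets using the nonvanishing of the limit. Your isolation of the separation-of-factors step as a clean standalone lemma (product-net convergence to a nonzero limit forces each factor to be Cauchy) is in fact tidier than the paper's slightly informal subnet-plus-uniqueness argument, and your derivation of the side result by freezing $F_U$ and letting $G\to U^\perp$ is exactly what the paper does in reverse roles.
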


\begin{proof}
Let $\eps>0$ smaller than $|\det(A)|$ and choose a finite-dimensional space $F\subset H$ such that $|\det(A_{F'})-\det(A)|<\eps/2$ for every finite-dimensional $F'\supset F$.
Replacing $F$ by $\Pr_U(F)\oplus\Pr_{U^\perp}(F)$ we may assume that $F=F_U\oplus F_{U^\perp}$, where $F_U=F\cap U$ and $F_{U^\perp}=F\cap U^\perp$.
Let then $F^1_U$ be any finite dimensional subspace of $U$ containing $F_U$.
Setting $F^1=F_U^1\oplus F_{U^\perp}$ we then have
\begin{align*}
|\det(A_{F^1_U})\det(A_{F_{U^\perp}})-\det(A)|=|\det(A_{F^1})-\det(A)|<\eps/2.
\end{align*}
Note that this implies that $|\det(A_{F^1_U})\det(A_{F_{U^\perp}})|>\frac{|\det(A)|}2$.
Since the same holds with $F_U$ instead of $F_U^1$ we get
$$
|\det(A_{F^1_U})\det(A_{F_{U^\perp}})-\det(A_{F_U})\det(A_{F_{U^\perp}})|<\eps
$$
or $|\det(A_{F^1_U})-\det(A_{F_U})|<\eps/|\det(A_{F_{U^\perp}})|$.
So the net $F_U^1\mapsto \det(A_{F^1_U})$ is bounded, hence has a convergent subnet with a non-zero limit which we call $\det(A_U)$.
The same holds true for the net $F_{U^\perp}^2\mapsto \det(A_{F_{U^\perp}}^2)$ which has a subnet converging to a complex number we call $\det(A_{U^\perp})$.
Taking the limit, we get $|\det(A_U)\det(A_{U^\perp})-\det(A)|\le\eps/2$ and as $\eps$ is arbitrary we find that the limits are unique, giving the claim.
\end{proof}

\begin{theorem}
Let $A:H\to H$ be a normal operator of determinant class.
Then $T=1-A$ is of trace class and the determinant $\det(A)=\det(1-T)$ is a Fredholm determinant.
\end{theorem}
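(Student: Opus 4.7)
The plan is to use the spectral theorem together with Lemma~\ref{positiveReduct} in two stages: first to force the essential spectrum $\sigma_{\mathrm{ess}}(A)$ into $\{1\}$, and then to extract from determinant-classness the absolute convergence $\sum_k m_k|1-\lambda_k|<\infty$, where the $\lambda_k\ne 1$ are the isolated eigenvalues with multiplicities $m_k$. This gives that $T=1-A$ is a trace class operator, and the identity $\det(A)=\det(1-T)$ follows from the product formulas.

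For the first stage, I would argue by contradiction: suppose $z_0\in\sigma_{\mathrm{ess}}(A)$ with $z_0\ne 1$, and pick $\eta>0$ small compared to $\min(|z_0-1|,|z_0|)$ (the case $z_0=0$ is handled separately by the elementary bound $|\det A_F|\le \|A_F\|^{\dim F}\to 0$). Then $U=E_A(\overline{B_\eta(z_0)})H$ is infinite-dimensional and reduces $A$, so by Lemma~\ref{positiveReduct}, $A_U$ is determinant class. Writing $A_U=z_0 I_U+R$ with $R$ normal and $\|R\|\le\eta$, every compression $A_F$ with $F\subset U$ satisfies $\|A_F v\|\ge(|z_0|-\eta)\|v\|$, hence is invertible with $\|A_F^{-1}\|\le 1/(|z_0|-\eta)$. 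For any unit vector $e\in U$ orthogonal to $F$, the Schur complement formula reads
$$
\frac{\det A_{F\oplus\Span(e)}}{\det A_F}\ =\ \sp{Ae,e}\ -\ S_F(e),
$$
where $S_F(e)$ is a bilinear correction involving $A_F^{-1}$ together with $P_FAe=P_FRe$ and $P_FA^*e=P_FR^*e$, and is bounded in modulus by $\eta^2/(|z_0|-\eta)$. The right-hand side therefore lies within $\eta+\eta^2/(|z_0|-\eta)$ of $z_0$, so for $\eta$ small enough its distance from $1$ exceeds $|z_0-1|/2$. On the other hand, determinant-classness forces this ratio to approach $1$ when $F$ is large enough in the net, yielding the desired contradiction.

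Once $\sigma_{\mathrm{ess}}(A)\subset\{1\}$ is established, each spectral value $\lambda_k\ne 1$ is an isolated eigenvalue of finite multiplicity $m_k$, and $\lambda_k\to 1$ if there are infinitely many. Let $V$ be the closure of the span of the eigenspaces $E_{\lambda_k}$; by normality these are pairwise orthogonal, and $V^\perp$ is the spectral subspace for $\{1\}$, on which $A=I$. Lemma~\ref{positiveReduct} gives $\det(A)=\det(A|_V)$. For every finite $S\subset\N$ the subspace $F_S=\bigoplus_{k\in S}E_{\lambda_k}$ is $A$-invariant with $\det(A_{F_S})=\prod_{k\in S}\lambda_k^{m_k}$. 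Combining the ``side result'' of Lemma~\ref{positiveReduct} applied to the decomposition $V=F_S\oplus(F_S^\perp\cap V)$ with the net convergence of $\det(A|_{V,F})$ shows that, once $S$ contains enough indices so that the projection $P_{F_S}$ is injective on the approximating subspace $F_\epsilon$ supplied by the net, one has $\det(A|_{F_S^\perp\cap V})\to 1$, hence $\prod_{k\in S}\lambda_k^{m_k}\to\det(A|_V)$ as $S$ exhausts $\N$ in \emph{any} ordering. Unconditional convergence of the infinite product to a nonzero limit is equivalent to $\sum_k m_k|1-\lambda_k|<\infty$, which is the trace-class condition for the compact normal operator $T=1-A$. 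Finally $\det(1-T)=\prod_k(1-(1-\lambda_k))^{m_k}=\prod_k\lambda_k^{m_k}=\det(A|_V)=\det(A)$.

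The main obstacle is the first stage, where the Schur complement estimate must be combined with the somewhat subtle fact that the ratio of two nearby-in-the-net determinants actually tends to $1$. The second stage is more routine, but the bridge from net convergence to unconditional product convergence requires the ``side result'' of Lemma~\ref{positiveReduct} to bound the tail determinant, as naive sequential subnet arguments do not quite apply.
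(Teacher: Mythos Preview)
Your Stage~1 is correct and genuinely different from the paper's. The paper treats $z_0\ne 1$ by a three-case split according to $|z_0|>1$, $|z_0|<1$, $|z_0|=1$: the first two use the convex-hull bound on eigenvalues of $A_F$ to force $|\det(A_F)|\to\infty$ or $\to 0$, while the unit-circle case requires a delicate construction of nested eigenvectors to show that $\det(A_{F_n})$ tracks the non-convergent sequence $z_0^{N+n}$. Your Schur-complement ratio estimate disposes of all $z_0\ne 1$, $z_0\ne 0$ uniformly; the price is a separate (trivial) treatment of $z_0=0$. This is cleaner.

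Stage~2, however, has a real gap. From Lemma~\ref{positiveReduct} you get $\det(A|_V)=\prod_{k\in S}\la_k^{m_k}\cdot\det(A|_{W_S})$ with $W_S=F_S^\perp\cap V$, and the side result tells you only that $\det(A|_{W_S})$ lies within a factor $1\pm\eps$ of $\det\big(A_{P_{W_S}(F_\eps)}\big)$. To conclude $\det(A|_{W_S})\to 1$ you still need $\det\big(A_{P_{W_S}(F_\eps)}\big)\to 1$, and the injectivity of $P_{F_S}$ on $F_\eps$ does not give this; that condition is a red herring. What is actually needed is the numerical-range observation (which the paper uses in its $|z_0|>1$ case but which you never invoke): for any finite-dimensional $G\subset W_S$ the eigenvalues of $A_G$ lie in the convex hull of $\{\la_k:k\notin S\}\subset\ol{B_{r_S}(1)}$, so that
\[
\big|\det(A_G)-1\big|\ \le\ (1+r_S)^{\dim G}-1,
\]
and hence $\det\big(A_{P_{W_S}(F_\eps)}\big)\to 1$ once $r_S=\max_{k\notin S}|\la_k-1|$ is small compared to $1/\dim F_\eps$. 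With this supplied, your route through unconditional product convergence and $\sum m_k|1-\la_k|<\infty$ goes through. The paper's Stage~2 takes a different path: it fixes an approximating $F$ and adjoins eigenvectors one at a time, tracking the product $\la_1\cdots\la_n$ directly; your route via the side result is arguably more transparent, once the missing estimate is inserted.
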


\begin{proof}
Let $\mu_A$ be the spectral measure of $A$, so $\mu_A$ is a projection valued measure defined on the Borel $\sigma$-algebra of $\C$, supported on the spectrum of $A$, such that $A=\int_\C t\,d\mu_A(t)$.
Further one has $\mu_A(\C)=\Id_H$. 
We first show that $A$ has discrete eigenvalue spectrum with only accumulation point 1.
For this it suffices to show that for each $z_0\in\C\sm \{1\}$ there exists a closed ball $B=B_r(z_0)$ around $z_0$ of some radius $r>0$ such that $\mu_A(B)H$ is finite-dimensional.
Suppose first that $|z_0|>1$.
Then $B$ can be chosen in a way that for every $z\in B$ one has $|z|\ge\theta$ for some fixed $\theta>1$.
The orthogonal decomposition
$$
H=\mu_A(B)H\oplus\mu_A(\C\sm B)H
$$
is stable under $A$ and by Lemma \ref{positiveReduct} we conclude that $A_W$ is determinant class, where $W=\mu_A(B)H$.
The spectrum of the normal operator $A_W$ is contained in the convex compact set $B$ and we claim that for every finite-dimensional subspace $F\subset W$ the eigenvalues of $A_F$ lie in the set $B$.
For this let $\la$ be an eigenvalue and $v\in F$ an eigenvector of norm one, then
\begin{align*}
\la=\la\sp{v,v}&=\sp{A_Fv,v}=\sp{\Pr_FAv,v}=\sp{Av,\Pr_Fv}\\
&=\sp{Av,v}
=\int_B t \sp{d\mu_A(t)v,v}.
\end{align*}
As $\sp{\mu_A(t)v,v}\in [0,1]$ and $\int_B\sp{d\mu_A(t)v,v}=1$, $\la$ lies in the closed convex hull of $B$ which is $B$ itself.
Therefore, if $F\subset W$ is a subspace of dimension $N$ it follows that $A_F$ has eigenvalues $\la_1,\dots,\la_N$ each of which satisfies $|\la_j|\ge \theta$.
Therefore
$$
|\det(A_F)|=|\la_1\cdots\la_N|\ge \theta^N.
$$
As $N$ increases, this tends to infinity contradicting convergence, so $N$ must be bounded.
The case $|z_0|<1$ is treated similarly.

The case $|z_0|=1$ is more subtle.
In this case assume that the space $W(r)=\mu_A(B_r(z_0))H$ is infinite-dimensional for every $r>0$.
We start with sufficiently small $\eps>0$ and choose a finite-dimensional subspace $F\subset H$ such that $|\det(A_F)-\det(A)|<\eps |\det(A)|/2$.
Let $N=\dim F$ and choose $0<r<1$ so small that $B_r(z_0)^{N}$ is contained in $B_r(z_0^{N})$ for some small $\delta>0$ which we determine later.
Let $U=W(r)$ and set $F_0=\Pr_U(F)$.
We then fix a vector $v_1\in W(r/2)$ linearly independent from $F_0$, next a vector $v_2\in W(r/4)$ linearly independent from the span of $F_0$ and $v_1$ and so on.
In the $n$-th step we choose a vector $v_n\in W(r/2^{n})$ linearly independent from $F_0$ and $v_1,\dots,v_{n-1}$.
Let $F_n$ denote the span of $F_0$ and $v_1,\dots,v_n$.
By Lemma \ref{positiveReduct} we get
$$
|\det(A_{F_n})-\det(A_U)|\le\eps|\det(A_U)|
$$
for every $n$.
Using the assumption, we are now going to contradict this estimate.
Let $\la_1,\dots,\la_{N+n}$ be the eigenvalues of $A_{F_n}$ ordered as follows:
As the space $W(r/2^n)$ is stable under $A$, there must be an eigenvalue $\la_{N+n}$ lying in $B_{r/2^n}(z_0)$.
Likewise, $\la_{N+n-1}$ can be assumed in $B_{r/2^{n-1}}(z_0)$ and so on.
Finally $\la_1,\dots,\la_N\in B_r(z_0)$.
As we have $|\la_1\cdots \la_N-z_0^N|\le\delta$ and $|\la_{N+j}-z_0|\le r/2^j$ and $|z_0|=1$ we get
\begin{align*}
|\la_1\cdots\la_{N+n}-z_0^{N+n}|
&\le |\la_1\cdots\la_{N+n}-z_0^N\la_{N+1}\cdots\la_{N+n}|\\
&\ \ +
|\la_{N+1}\cdots\la_{N+n}-z_0^{n}|\\
&\le \delta |\la_{N+1}\cdots\la_{N+n}|\\
&\ \ +|\la_{N+1}\cdots\la_{N+n}-z_0\la_{N+2}\cdots\la_{N+n}|\\
&\ \ +|\la_{N+2}\cdots\la_{N+n}-z_0^{n-1}|\\
&\le \delta |\la_{N+1}\cdots\la_{N+n}|\\
&\ \ +\frac r2|\la_{N+2}\cdots\la_{N+n}|\\
&\ \ +|\la_{N+2}\cdots\la_{N+n}-z_0^{n-1}|
\end{align*}
This iterates to
\begin{align*}
|\la_1\cdots\la_{N+n}-z_0^{N+n}|
&\le \delta |\la_{N+1}\cdots\la_{N+n}|\\
&\ \ +\frac r2|\la_{N+2}\cdots\la_{N+n}|\\
&\ \ +\frac r4|\la_{N+3}\cdots\la_{N+n}|\\
&\ \ \vdots\\
&\ \ +\frac r{2^n}.
\end{align*}
Now $|\la_{N+1}\cdots\la_{N+n}|\le (1+\frac12)(1+\frac14)\cdots$.
So setting $C=\prod_{j=1}^\infty(1+\frac1{2^j})$ we have
\begin{align*}
|\det(A_{F_n})-z_0^{N+n}|&=|\la_1\cdots\la_{N+n}-z_0^{N+n}|\\
&\le \delta C+C\sum_{j=1}^\infty \frac1{2^j}\\
&=(\delta+r)C.
\end{align*}
Putting things together we arrive at
$$
|\det(A_U)-z_0^{N+n}|\le\eps|\det(A_U)|+(\delta+r)C.
$$
Choosing $\eps$ and $\delta$ small, we arrive at a contradiction.

Let $\sigma(A)$ denote the spectrum of the normal operator $A$. We deduce that $\sigma(A)\sm\{1\}$ consists of eigenvalues only, which can only accumulate at $1$.
Let $F\subset H$ be a finite-dimensional subspace such that $|\det(A_{F'})-\det(A)|<|\det(A)|/2$ for every finite dimensional subspace $F'$ with $F'\supset F$.
Let $v_1,v_2,\dots$ be a maximal family of normalized orthogonal eigenvectors for $A$ such that the span of $v_1,v_2,\dots$ has zero intersection with $F$.
As $F$ is finite-dimensional, only finitely many eigenvalues of $A$ are left out.
For each $n\in\N$ denote by $F_n$ the span of $F$ together with $v_1,\dots,v_n$.
Then
$$
|\la_1\cdots\la_n\det(A_F)-\det(A)|<|\det(A)|/2
$$
for every $n$, or
$$
|\la_1\cdots\la_n-\frac{\det(A)}{\det(A_F)}|<\frac{|\det(A)|}{2|\det(A_F)|}.
$$
So the sequence $\la_1\cdots\la_n$ remains bounded.
As the sequence $\la_j$ tends to 1 and by increasing $F$ we can assume that $\frac{det(A)}{\det(A_F)}$ is as close to 1 as we want, we can assume that
$$
\log(\la_1\cdots\la_n)=\sum_{j=1}^n\log(\la_j),
$$
and the latter remains bounded for all $n$, even if we change the order of the $\la_j$.
This, however implies that the real part of the series remains bounded by a bound independent of reordering, so it must converge absolutely.
The same holde for the imaginary part and so the series $\sum_{n=1}^\infty\log(\la_j)$ converges absolutely, which implies the theorem. 
\end{proof}

\begin{proposition}
Let $A,B$ be normal determinant class operators.
Then $AB$ is of determinant class and 
$$
\det(AB)=\det(A)\det(B).
$$
\end{proposition}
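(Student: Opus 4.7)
The plan is to bypass the non-normality of $AB$ by reducing everything to Fredholm-determinant calculus. Since $A$ and $B$ are normal and of determinant class, the previous theorem gives that $S:=1-A$ and $T:=1-B$ are trace class, and the determinants $\det(A)$, $\det(B)$ coincide with the Fredholm determinants $\det(1-S)$, $\det(1-T)$.

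Next I would compute $1-AB=1-(1-S)(1-T)=S+T-ST$. The trace class is a two-sided ideal in $\CB(H)$ closed under addition, so $S+T-ST$ is again trace class. This is precisely the hypothesis of the Proposition of Section~2.2, which (crucially) makes no normality assumption on the ambient operator: it therefore yields at once that $AB$ is of determinant class, with $\det(AB)$ equal to the Fredholm determinant of $(1-S)(1-T)$.

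Finally, the multiplicativity of the Fredholm determinant recalled at the start of Section~2.2 rewrites this as $\det(1-S)\det(1-T)$, which by the identification of the first paragraph equals $\det(A)\det(B)$. Being a product of two non-zero complex numbers, this is non-zero, so the full definition of ``determinant class'' (including non-vanishing of the limit) is met, and the formula $\det(AB)=\det(A)\det(B)$ is established.

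The only real subtlety is that $AB$ need not be normal even when $A$ and $B$ are, so one cannot invoke the theorem just proved to identify $\det(AB)$ with a Fredholm determinant. The saving fact is that the easy ``$\Leftarrow$'' direction of that correspondence --- the Proposition of Section~2.2 --- is already available for arbitrary, not necessarily normal, operators of the form $1-T$ with $T$ trace class, and that is all we need here.
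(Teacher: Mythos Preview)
Your argument is correct and follows essentially the same route as the paper: use the theorem on normal determinant class operators to write $A=1-S$, $B=1-T$ with $S,T$ trace class, observe that $S+T-ST$ is again trace class, and then appeal to the earlier Proposition (valid without any normality hypothesis) together with the multiplicativity of the Fredholm determinant. Your write-up is in fact more careful than the paper's, which compresses all of this into two sentences and does not explicitly flag the point that $AB$ need not be normal or the non-vanishing condition in the definition of determinant class.
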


\begin{proof}
By the theorem we can write $A=1-S$ and $B=1-T$ with trace class operators $S,T$.
Then $AB=(1-S)(1-T)=1-S-T+ST$ and $S+T-ST$ is of trace class again.
The equation $\det(AB)=\det(A)\det(B)$ follows form the properties of the Fredholm determinant \cite{Simon}.
\end{proof}

{\bf Questions.}
\begin{itemize}
\item Let $A$ be of determinant class. Does it follw that $1-A$ is of trace class?
\item Suppose that $A,B$ are determinant class.
Does it follow that $AB$ and $A\oplus B$ are determinant class?
\end{itemize}

\newpage
\begin{bibdiv} \begin{biblist}

\bib{Bochner}{article}{
   author={Bochner, S.},
   title={Integration von Funktionen deren Werte die Elemente eines Vektorraumes sind},
   journal={Fund. Math., vol. 20, pp. 262-276.},
   date={1935}
}

\bib{Bourbaki0}{book}{
   author={Bourbaki, N.},
   title={Topological vector spaces. Chapters 1--5},
   series={Elements of Mathematics (Berlin)},
   note={Translated from the French by H. G. Eggleston and S. Madan},
   publisher={Springer-Verlag},
   place={Berlin},
   date={1987},
   pages={viii+364},
   isbn={3-540-13627-4},
   review={\MR{910295 (88g:46002)}},
}

\bib{Bourbaki}{book}{
   author={Bourbaki, Nicolas},
   title={Integration. I. Chapters 1--6},
   series={Elements of Mathematics (Berlin)},
   note={Translated from the 1959, 1965 and 1967 French originals by
   Sterling K. Berberian},
   publisher={Springer-Verlag},
   place={Berlin},
   date={2004},
   pages={xvi+472},
   isbn={3-540-41129-1},
}

\bib{HA2}{book}{
   author={Deitmar, Anton},
   author={Echterhoff, Siegfried},
   title={Principles of harmonic analysis},
   series={Universitext},
   publisher={Springer},
   place={New York},
   date={2009},
   pages={xvi+333},
   isbn={978-0-387-85468-7},
}

\bib{Edwards}{book}{
   author={Edwards, R. E.},
   title={Functional analysis. Theory and applications},
   publisher={Holt},
   place={Rinehart and Winston, New York},
   date={1965},
   pages={xiii+781},
}

 \bib{Gelfand}{article}{
   author={Gelfand, I.M.},
   title={Sur un lemme de la theorie des espaces lineaires},
   journal={Comm. Inst. Sci. Math. de Kharkoff, no. 4, vol. 13, pp. 35-40.},
   date={1936}
}

 \bib{Garrett}{article}{
   author={Garrett, Paul},
   title={Vector-Valued Integrals},
   journal={\\ http://www.math.umn.edu/$\sim$garrett/m/fun/Notes/07$\underline{\ }$vv$\underline{\ }$integrals.pdf},
   date={2010}
}

\bib{Pettis}{article}{
   author={Pettis, J.},
   title={On integration in vector spaces},
   journal={Trans. AMS, vol. 44, pp. 277-304.},
   date={1938}
}

\bib{Rudin}{book}{
   author={Rudin, Walter},
   title={Functional analysis},
   series={International Series in Pure and Applied Mathematics},
   edition={2},
   publisher={McGraw-Hill Inc.},
   place={New York},
   date={1991},
   pages={xviii+424},
   isbn={0-07-054236-8},
   review={\MR{1157815 (92k:46001)}},
}

\bib{Schaefer}{book}{
   author={Schaefer, H. H.},
   author={Wolff, M. P.},
   title={Topological vector spaces},
   series={Graduate Texts in Mathematics},
   volume={3},
   edition={2},
   publisher={Springer-Verlag},
   place={New York},
   date={1999},
   pages={xii+346},
   isbn={0-387-98726-6},
}

\bib{Simon}{book}{
   author={Simon, Barry},
   title={Trace ideals and their applications},
   series={Mathematical Surveys and Monographs},
   volume={120},
   edition={2},
   publisher={American Mathematical Society},
   place={Providence, RI},
   date={2005},
   pages={viii+150},
   isbn={0-8218-3581-5},
}

\bib{Thomas}{article}{
   author={Thomas, G. Erik F.},
   title={Integration of functions with values in locally convex Suslin
   spaces},
   journal={Trans. Amer. Math. Soc.},
   volume={212},
   date={1975},
   pages={61--81},
   issn={0002-9947},
   review={\MR{0385067 (52 \#5937)}},
}

\end{biblist} \end{bibdiv}

{\small 
- Jablonskistr. 6A,
10405 Berlin\\
- Mathematisches Institut,
Auf der Morgenstelle 10,
72076 T\"ubingen,
Germany\\
\tt \phantom 1 deitmar@uni-tuebingen.de}

\end{document}